\documentclass[twocolumn,8pt]{amsproc}

\usepackage{enumitem}
\usepackage{amsmath,amssymb,amsthm,mathtools,mathrsfs,amsfonts}

\theoremstyle{plain}
\newtheorem{theorem}{Theorem}
\newtheorem{example}[theorem]{Example}
\newtheorem{remark}[theorem]{Remark}
\newtheorem{lemma}[theorem]{Lemma}
\newtheorem{definition}[theorem]{Definition}
\newtheorem{question}[theorem]{Question}

\newtheorem{proposition}[theorem]{Proposition}

\DeclareMathOperator{\codim}{codim}
\DeclareMathOperator{\height}{height}
\DeclareMathOperator{\Spec}{Spec}
\DeclareMathOperator{\Span}{Span}
\DeclareMathOperator{\Ass}{Ass}
\DeclareMathOperator{\grade}{grade}
\DeclareMathOperator{\init}{in}
\DeclareMathOperator{\LT}{LT}
\DeclareMathOperator{\Sp}{S}
\DeclareMathOperator{\lcm}{lcm}

\def\bA{\mathbb{A}}

\begin{document}

\begin{abstract}
We give an ideal in a polynomial ring in three variables, generated by three elements and having height equal to three, but for which no permutation of its generators is a regular sequence. We prove that in a polynomial ring over an infinite field, an ideal of height $n$ generated by $n$ polynomials, always admits a set of $n$ generators, for which any permutation is a regular sequence.
\end{abstract}

\title{Some Remarks on Regular Sequences and Complete Intersections}

\author{Manolis C. Tsakiris}

\thanks{AI tools were not used in the generation of the content of this note.}
\thanks{The author thanks Ethan Reed, a discussion with whom stimulated Question \ref{question}, Aldo Conca for asking a question which became the statement of Theorem \ref{thm:polynomial-ring}, and Alessandro Di Stefani for asking about quasi-regular sequences.}
\thanks{The author is supported by the National Key R$\&$D Program of China (2023YFA1009402).}

\address{State Key Laboratory of Mathematical Sciences \\ Academy of Mathematics and Systems Science \\ Chinese Academy of Sciences \\ 100109, Beijing, manolis@amss.ac.cn}

\maketitle

\section{Introduction}\label{section:Introduction}

\noindent In linear algebra, $n$ equations $$a_{j1} x_1 +\cdots+a_{jd} x_d = b_j, \, \, \, \, j \in [n]$$ in $d$ variables define a solution space of dimension $d-n$, i.e. of codimension equal to the number of equations, if and only if i) they are consistent, and ii) their homogeneous parts 
\begin{align*}
a_{j1} x_1 +\cdots+a_{jd} x_d, \, \, \, j \in [n] 
\end{align*} are linearly independent polynomials. This is a special instance of a more general notion:

\begin{definition}
Let $A$ be a commutative ring and $\underline{\alpha}_n:=\alpha_1,\dots,\alpha_n$ a sequence of elements of $A$. We say $\underline{\alpha}_n$ is a regular sequence of $A$ if i) $A \neq (\underline{\alpha}_n) A$, and ii) for every $j \in [n]$, $\alpha_j$ is a not a zero-divisor of the ring $A/(\alpha_1,\dots,\alpha_{j-1})$.
\end{definition} 

Denote once and for all $I$ the ideal of $A$ generated by the elements $\underline{\alpha}_n$. Let $P$ be any prime ideal minimal over $I$. As regular sequences are preserved by localization \cite[Cor. 1.1.3(a)]{BrunsHerzog}, $\underline{\alpha}_n$ will be a regular sequence of $A_P$. As the length of a maximal regular sequence in a local ring does not exceed the dimension of the ring \cite[Cor. 1.2.13]{BrunsHerzog}, we must have 
$$ n \le \dim(A_P) = \height(P).$$
On the other hand, $P$ is a minimal prime of an ideal generated by $n$ elements, so that Krull's height theorem \cite[Thm. 13.5]{Matsumura} asserts
$ \height(P) \le n.$ Thus every minimal prime of $I$ has height $n$, so 
$\height(I) = n.$ Inasmuch as 
$$\codim \big(\mathcal{V}(I)\big) = \height(I),$$ by definition, where $\mathcal{V}(I)$ is the closed subset of the topological space $\Spec(A)$ consisting of the prime ideals that contain $I$, we see that the ideal generated by a regular sequence of length $n$, defines a locus $\mathcal{V}(I)$ of codimension $n$. 

When $A$ is local Cohen-Macaulay, we have that $\height(I) = n$ if and only if $\underline{\alpha}_n$ is a regular sequence \cite[Thm. 2.1.2(d)]{BrunsHerzog}. If $A$ is Cohen-Macaulay but not necessarily local, an elegant proof has been given in \cite[Thm., Appendix]{Davis} that $\height(I)=n$ implies that $I$ is generated by a regular sequence of length $n$. Still, $\underline{\alpha}_n$ itself may not be a regular sequence:

\begin{example}{\cite[Ex. 1.1.13]{BrunsHerzog}}\label{ex:BrunsHerzog}
Let $A = \mathbb{K}[x,y,z]$ be a polynomial ring over field $\mathbb{K}$ and $$\underline{\alpha}_3 = y(1-x), \, z(1-x), \, x.$$ There is only one minimal prime over $I$,   
$ P = (x,y,z),$ and so $\height(I) = 3$. On the other hand, $z(1-x)$ is a zero-divisor of $A / \big( y(1-x) \big)$. 
\end{example} 

\section{The Question}

To distinguish the phenomenon of Example \ref{ex:BrunsHerzog}, we make the following definition:

\begin{definition}
Let $A$ be a Cohen-Macaulay ring. We call $I = (\alpha_1,\dots,\alpha_n)$ a global complete intersection, if $\height(I) = n$. 
\end{definition}

\begin{remark}
The term ``complete intersection" is traditionally used in algebraic geometry to indicate projective varieties of codimension $n$ whose defining ideal is generated by $n$ homogeneous polynomials, e.g. see \cite[Ex. II.8.4]{Hartshorne} or \cite[11.8]{Harris}. In that case, these polynomials necessarily form a regular sequence, e.g. by \cite[Thm. 1.5.15(c) \& Thm. 2.1.2(d)]{BrunsHerzog}. Commutative algebra texts, such as \cite{Matsumura} or \cite{BrunsHerzog} only define ``complete intersection" in the local case, while in \cite{Eisenbud} the definition of a complete intersection requires $\underline{\alpha}_n$ to be a regular sequence. Our ring-theoretic definition here is more general, and is consistent with \cite[Dfn. 00S9]{stacks-project} and \cite[Dfn. 9.5.8]{Vakil}. In a polynomial ring, it coincides with what Macaulay called in his book ``a module of the principal class" \cite[Dfn. 47]{Macaulay}.
\end{remark}

In a local ring, or more generally in a \emph{$^*$local ring} \cite[Dfn. 1.5.13]{BrunsHerzog}, $\underline{\alpha}_n$ is a regular sequence if and only if $\alpha_{\pi(1)},\dots,\alpha_{\pi(n)}$ is a regular sequence, for any permutation $\pi$ on $[n]=\{1,\dots,n\}$, \cite[Prp. 1.1.6 \& Ex. 1.5.23]{BrunsHerzog}. In the non-local Example \ref{ex:BrunsHerzog}, we saw that $$y(1-x), \, z(1-x), \, x$$ is not a regular sequence, but observe that $$x, \, y(1-x), \, z(1-x)$$ is. One might be tempted to ask:

\begin{question} \label{question}
Let $A$ be a Cohen-Macaulay ring and $I = (\alpha_1,\dots,\alpha_n)$ a global complete intersection. Is it true that some permutation of the $\alpha_j$'s is a regular sequence?
\end{question}

\section{Some Special Cases}

Suppose $n=1$. If $\alpha_1$ were a zero-divisor, then $\alpha_1$ would be contained in some associated prime $P$ of $A$. But as $A$ is Cohen-Macaulay, the ``unmixedness theorem" \cite[Thm. 2.1.6]{BrunsHerzog} applied to the zero ideal, gives that all associated primes of $A$ have height zero. Hence $\height(P) = 0$ and so $\height(\alpha_1)=0$, violating the hypothesis that $\height(\alpha_1) = 1$. Hence the answer to Question \ref{question} is trivially affirmative.

Next, suppose that $A$ is an integral domain and $n=2$. The hypothesis 
\begin{align} \label{n=2}
\height(I) = \height(\alpha_1,\alpha_2) = 2 \tag{$*$}
\end{align} and Krull's height theorem preclude the possibility of either $\alpha_1, \alpha_2$ being zero. As $A$ is an integral domain, $\alpha_1$ is not a zero-divisor, and Krull's Hauptidealsatz gives $\height(\alpha_1) = 1.$ By the unmixedness theorem, $\height(Q) = 1$ for any $Q \in \Spec(A)$ which is an associated prime of $\bar{A}:=A/(\alpha_1)$. Recall the set of zero-divisors of $\bar{A}$ is the union of all such $\bar{Q}$'s, where $\bar{Q}$ indicates the class of $Q$ in $\bar{A}$. Hence if $\bar{\alpha}_2$ were a zero-divisor of $\bar{A}$, we would have $\bar{\alpha}_2 \in \bar{Q}$ for some $Q \in \Ass_A \big(A/(\alpha_1)\big)$. Therefore, the ideal $I$ would be contained in the height-$1$ prime ideal $Q$, ensuing the contradiction $\height(I) = 1$. This again answers Question \ref{question} affirmatively.

\section{The Counterexample} \label{section:Counterexample}

We give a counterexample to Question \ref{question} for $A = \mathbb{K}[x,y,z]$ a polynomial ring in three variables over a field $\mathbb{K}$ of characteristic other than $2$. Consider 
\begin{align*}
\alpha_1 & = x(x+1)yz, \\
\alpha_2 & = x(xy+y+1)(\alpha_1+1), \\
\alpha_3 & = (x+1)(xz+x+2)(\alpha_1 + 1).
\end{align*}

\noindent Let $P$ be minimal prime over $I = (\alpha_1,\alpha_2,\alpha_3)$. Then $P$ must contain at least one factor from each $\alpha_i$. Since $\alpha_1 \in P$, $P$ may not contain the factor $\alpha_1 + 1$ that occurs in $\alpha_2$ and $\alpha_3$. Hence
\begin{enumerate}[leftmargin=*]
\item Either $x \in P$ or $xy+y+1 \in P$.
\item Either $x+1 \in P$ or $xz+x+2 \in P$.
\end{enumerate} Note that it can not be that both $x, x+1$ or $x, xz+x+2$ are in $P$, thus precluding the possibility $x \in P$. Similarly, it can not be that both $x+1,x$ or $x+1,xy+y+1$ are in $P$, precluding the possibility $x+1 \in P$.
In summary, 
\begin{enumerate}[leftmargin=*]
\item $x, \, \, x+1 \not\in P$, 
\item $xy+y+1, \, \, xz+x+2 \in P$.
\end{enumerate} Since not both $y, \, xy+y+1$ can be in $P$, the only factor from $\alpha_1$ that $P$ can contain is $z$, and so $P$ must contain the prime ideal
\begin{align*}
 Q &= (z, \, xy+y+1, \, xz+x+2) \\
& = (x+2, y-1, z).
\end{align*} As $Q$ is a prime ideal, in fact a maximal ideal, the only minimal prime above $I$ is $Q$, therefore $\height(I) = 3$ and $I$ is indeed a global complete intersection. Next, for any $i \neq j$ we have
$$\height(\alpha_i,\alpha_j) = 1,$$
because $\alpha_i$ and $\alpha_j$ share a common factor. But if $\alpha_{\pi(1)}, \alpha_{\pi(2)}, \alpha_{\pi(3)}$ were a regular sequence, then so would be $\alpha_{\pi(1)}, \alpha_{\pi(2)}$, thus forcing 
$$ \height \big(\alpha_{\pi(1)}, \alpha_{\pi(2)} \big)  = 2,$$
by the arguments of \S \ref{section:Introduction}. We conclude that no permutation of the $\alpha_i$'s is a regular sequence. 

We close with the observation that a counterexample for $n=2$ is obtained by taking $\bar{A}  = \mathbb{K}[x,y,z]/(\alpha_1)$ and noting that both $\bar{\alpha}_2, \, \bar{\alpha}_3$ are zero-divisors of $\bar{A}$, but $\height\big(\bar{\alpha}_2, \, \bar{\alpha}_3 \big)=2$. 

\section{A Theorem in Cohen-Macaulay Rings}

In a Cohen-Macaulay ring $A$, we have 
$$ \height(J) = \grade(J),$$
for any ideal $J \neq A$, where $\grade(J)$ indicates the maximal length of a regular sequence of $A$ in $J$ \cite[Cor. 2.1.4]{BrunsHerzog}. Thus, even though, as we have seen, for an ideal $I = (\alpha_1,\dots,\alpha_n)$ of height $n$, no permutation of its generators is guaranteed to be a regular sequence, it is true $\grade(I) = n$ and so $I$ does contain a regular sequence of length $n$. In fact, as mentioned in \S 1, $I$ is generated by a regular sequence of length $n$ \cite[Thm., Appendix]{Davis}. When $A$ has the additional structural feature of being a vector space over an infinite field $\mathbb{K}$, such a regular sequence may be obtained via linear combinations of the generators of $I$ with \emph{generic} coefficients.

To prove a precise statement, we need a technical lemma that concerns the Zariski topology. Recall i) any affine space $\mathbb{K}^d$ is irreducible, ii) any non-empty open set of an irreducible space is dense, iii) any two non-empty open subsets of an irreducible space intersect, and iv) a dense subset intersects any non-empty open set.

\begin{lemma} \label{lem:dense}
Let $\bA = \bA_1 \times \bA_2$ be a product of two affine spaces.
Suppose prescribed dense set $U_1$ of $\bA_1$, and for any $y_1 \in U_1$ dense set $U_{y_1}$ of $\bA_2$. Then the following set is dense in $\bA$:
$$U = \bigcup_{y_1 \in U_1} \{y_1\} \times U_{y_1}$$
\end{lemma}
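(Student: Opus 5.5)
The plan is to check density directly against a base of the Zariski topology of $\bA$. Throughout I take the ground field $\mathbb{K}$ to be infinite, as in the surrounding discussion; this is what makes facts i)--iv) hold. Write $\bA_1=\mathbb{K}^{d_1}$ and $\bA_2=\mathbb{K}^{d_2}$, so that $\bA=\mathbb{K}^{d_1+d_2}$ carries the Zariski topology of the big affine space, not the product topology.

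The basic open sets $D(f)=\{f\neq 0\}$, for $0\neq f\in\mathbb{K}[y_1,y_2]$, form a base of this topology. Here $y_1$ and $y_2$ denote the tuples of coordinates on $\bA_1$ and $\bA_2$. So it suffices to show that $U$ meets every such $D(f)$ with $f\neq 0$.

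Fix a nonzero $f$ and regard it as a polynomial in the variables $y_2$ with coefficients in $\mathbb{K}[y_1]$:
$$f=\sum_{\beta} c_\beta(y_1)\, y_2^\beta .$$
Since $f\neq 0$, some coefficient $c:=c_{\beta_0}$ is a nonzero polynomial on $\bA_1$. Because $\mathbb{K}$ is infinite, $D(c)$ is a non-empty open subset of $\bA_1$. By iv), the dense set $U_1$ meets it, so we may pick $y_1^\ast\in U_1$ with $c(y_1^\ast)\neq 0$.

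Now specialize. The polynomial $g(y_2):=f(y_1^\ast,y_2)\in\mathbb{K}[y_2]$ has the nonzero coefficient $c(y_1^\ast)$ at the monomial $y_2^{\beta_0}$, so $g\neq 0$. Hence $D(g)$ is a non-empty open set of $\bA_2$, again using that $\mathbb{K}$ is infinite. Applying iv) to the dense set $U_{y_1^\ast}$ gives some $y_2^\ast\in U_{y_1^\ast}$ with $g(y_2^\ast)\neq 0$. Then
$$(y_1^\ast,y_2^\ast)\in \{y_1^\ast\}\times U_{y_1^\ast}\subseteq U \quad\text{and}\quad f(y_1^\ast,y_2^\ast)\neq 0,$$
so $U\cap D(f)\neq\emptyset$, and therefore $U$ is dense.

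The only delicate point is the reduction to basic open sets: the topology on $\bA$ is finer than the product topology, so one cannot argue with products of opens. Expanding $f$ in the second block of variables and choosing a nonzero coefficient is exactly what handles this. The rest is a routine application of the density facts i)--iv), together with the infinitude of $\mathbb{K}$, which guarantees that nonzero polynomials have non-empty non-vanishing loci.
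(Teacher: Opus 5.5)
Your proof is correct, but it takes a different and more elementary route than the paper's.

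\textbf{The paper's route.} It works with an arbitrary non-empty open $V\subseteq\bA$. Irreducibility of $\bA$ shows that $\pi_1(V)$ is dense in $\bA_1$. Chevalley's theorem then shows that this dense constructible set contains a dense open $V_1'$. The paper picks $y_1\in U_1\cap V_1'$ and finishes in the fiber $\pi_1^{-1}(y_1)\cong\bA_2$, as you do.

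\textbf{Your route.} You reduce to basic opens $D(f)$ and expand $f$ in the $y_2$-variables. This gives an explicit non-empty open set $D(c)\subseteq\bA_1$ over which every fiber of $D(f)$ is non-empty, so $D(c)\subseteq\pi_1(D(f))$. That is exactly the input the paper gets from irreducibility plus Chevalley. You replace a constructibility theorem by a one-line coefficient argument, needing only that nonzero polynomials over an infinite field have a non-vanishing point and fact iv). Your specialization $g(y_2)=f(y_1^\ast,y_2)$ also makes explicit something the paper uses tacitly: the subspace topology on $\{y_1\}\times\bA_2$ is the Zariski topology of $\bA_2$, so $\{y_1\}\times U_{y_1}$ is dense there.

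\textbf{Trade-off.} The paper's version is coordinate-free. It uses only that $\bA$ is irreducible, that $\pi_1$ is a morphism to which Chevalley applies, and that fibers are copies of the second factor, so it adapts readily to other products of irreducible spaces. Yours is tied to the polynomial description of the open sets of $\mathbb{K}^{d_1+d_2}$. That is all that is needed for the lemma as stated and used in Theorem \ref{thm:CM}, and it avoids the heavier machinery.
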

\begin{proof}
Let $V$ be a non-empty open set of $\bA$. We must prove that $U \cap V$ is non-empty. Let 
$ \pi_1: \bA \rightarrow \bA_1 $ be the canonical projection onto the first factor. If $\pi_1(V)$ were not dense in $\bA_1 $, it would be contained in a closed subset $Y_1 \subsetneq \bA_1$. As $\bA_1 \setminus Y_1$ would be open and non-empty, and $\pi_1$ is surjective and continuous, $\pi_1^{-1} \big(\bA_1 \setminus Y_1 \big)$ would also be open and non-empty. As $\bA$ is irreducible, its two open and non-empty subsets $\pi_1^{-1} \big(\bA_1 \setminus Y_1 \big)$ and $V$ would intersect. But this would imply that $\pi_1(V)$ meets $\bA_1 \setminus Y_1$, which contradicts the definition of $Y_1$. Therefore, $\pi_1(V)$ is dense in $\bA_1$. 

\emph{Chevalley's Theorem} \cite[8.4.2]{Vakil} asserts $\pi_1(V)$ is constructible. As it is dense, it must contain a dense open set $V'_1$ of $\bA_1$. As by hypothesis $U_1$ is dense in $\bA_1$, it must intersect $V'_1$. Take $y_1 \in U_1 \cap V'_1.$ As $y_1 \in \pi_1(V),$ $\pi_1^{-1}(y_1) \cap V$ is a non-empty open set of $\pi_1^{-1}(y_1)$. As $y_1 \in U_1$, by hypothesis $U_{y_1}$ is dense in $\bA_2$, so that $\{y_1\} \times U_{y_1}$ is also dense in $\pi_1^{-1}(y_1)$. As such, $\{y_1\} \times U_{y_1}$ must intersect the non-empty open set $\pi_1^{-1}(y_1) \cap V$. So there is $y_2 \in \bA_2$ such that $$(y_1,y_2) \in  \Big(\{y_1\} \times U_{y_1} \Big) \cap \Big(\pi_1^{-1}(y_1) \cap V\Big).$$ As $(y_1,y_2) \in U \cap V$, the proof is concluded.
\end{proof}  

\begin{theorem} \label{thm:CM}
Let $A$ be a Cohen-Macaulay ring containing an infinite field $\mathbb{K}$, and  let $J = (\beta_1,\dots,\beta_m)$ be an ideal of height $n$. Then there exists a dense set $U \subseteq \mathbb{K}^{n \times m}$, such that for every $C = (c_{ij}) \in U$, and with 
$$b_i := c_{i1} \beta_1 + \cdots + c_{im} \beta_m, \, \, \, i \in [n]$$ the sequence $\underline{b}_n$ is regular in $A$. If in addition $m=n$, then $J = (\underline{b}_n).$
\end{theorem}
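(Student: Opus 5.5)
We argue by induction on $n$, building the rows of $C$ one at a time and using Lemma \ref{lem:dense} to assemble the row-by-row generic choices into a single dense set in $\mathbb{K}^{n\times m}=\mathbb{K}^{(n-1)\times m}\times\mathbb{K}^{m}$.

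The key tool is a prime-avoidance statement over an infinite field. Let $Q_1,\dots,Q_s$ be finitely many prime ideals of $A$, none of which contains $J$. For each $k$, the set of $c\in\mathbb{K}^m$ with $\sum_j c_j\beta_j\in Q_k$ is a $\mathbb{K}$-subspace of $\mathbb{K}^m$: it is the preimage of $Q_k$ under the $\mathbb{K}$-linear map $c\mapsto\sum_j c_j\beta_j$. It is a proper subspace, because otherwise every $\beta_j$, and hence $J$, would lie in $Q_k$. A proper subspace is Zariski closed and proper, so its complement is a non-empty open set, hence dense. A finite intersection of non-empty opens in the irreducible space $\mathbb{K}^m$ is again non-empty and open. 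Therefore the set of $c$ for which $\sum_j c_j\beta_j$ avoids every $Q_k$ is dense open.

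Now fix $0\le i<n$ and suppose $b_1,\dots,b_i$ is a regular sequence lying in $J$. Put $\bar A=A/(b_1,\dots,b_i)$. This ring is again Cohen--Macaulay (\cite[Thm. 2.1.3]{BrunsHerzog}). Since $b_1,\dots,b_i$ is regular, $(b_1,\dots,b_i)$ has height $i$, as shown in \S\ref{section:Introduction}. By the unmixedness theorem, all associated primes $Q_1,\dots,Q_s$ of $(b_1,\dots,b_i)$ have height $i<n=\height(J)$, so none of them contains $J$. The zero-divisors of $\bar A$ are exactly the union of the $\bar Q_k$. Applying the avoidance statement, we get a dense open set $U_{(b_1,\dots,b_i)}\subseteq\mathbb{K}^m$ of rows $c$ for which $b_{i+1}=\sum_j c_j\beta_j$ is a nonzerodivisor on $\bar A$. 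The condition $\bar A\neq 0$ in the definition of a regular sequence holds automatically, since $(\underline b_{i+1})\subseteq J\neq A$.

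The induction then runs as follows. For $i=1$, take $U_1\subseteq\mathbb{K}^m$ to be the dense set just obtained with no previous elements; this is the case $n=1$ of \S3. Given a dense set $U_i\subseteq\mathbb{K}^{i\times m}$ of matrices whose rows give a regular sequence, attach to each $y\in U_i$ the dense set $U_y\subseteq\mathbb{K}^m$ from the previous paragraph. Lemma \ref{lem:dense} then says that $U_{i+1}=\bigcup_{y\in U_i}\{y\}\times U_y$ is dense in $\mathbb{K}^{(i+1)\times m}$, and by construction every point of it yields a regular sequence of length $i+1$. The dependence of $U_y$ on $y$ is arbitrary, with no algebraicity in $y$ required, and this is exactly what Lemma \ref{lem:dense} is designed to handle. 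Repeating up to $i=n$ produces $U$.

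For the final claim with $m=n$, intersect $U$ with the dense open set $\{\det C\neq0\}$. First we check that this intersection is dense. Let $V$ be any non-empty open set of $\mathbb{K}^{n\times n}$. Then $V\cap\{\det C\neq0\}$ is non-empty open, since $\mathbb{K}^{n\times n}$ is irreducible, so it meets the dense set $U$; this gives density. For $C$ in the intersection, $C$ is invertible over $\mathbb{K}$, so each $\beta_j$ is a $\mathbb{K}$-linear combination of the $b_i$, and hence $J=(\underline b_n)$.

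The main obstacle is the density bookkeeping rather than any commutative algebra. The generic set at step $i+1$ depends on the earlier $b$'s only through their associated primes, which need not vary algebraically. That is why the argument uses Lemma \ref{lem:dense}, with its Chevalley-based proof, instead of trying to exhibit a single open set.
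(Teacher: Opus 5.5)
Your proof is correct and is essentially the paper's argument: generic rows avoid the finitely many associated primes, which are $\mathbb{K}$-subspaces, and Lemma \ref{lem:dense} assembles the row-by-row choices. The only differences are cosmetic or in the last step. You append the last row and use unmixedness of $(b_1,\dots,b_i)$ in $A$, whereas the paper splits off the first row and applies induction to $A/(b_1)$ with $\height(\bar J)=n-1$; and for $m=n$ you shrink $U$ to $U\cap\{\det C\neq 0\}$, which is valid, while the paper shows shrinking is unnecessary because a singular $C$ would make $(\underline b_n)$ generated by $n-1$ elements, forcing height at most $n-1$ by Krull and contradicting regularity.
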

\begin{proof}
Consider the $\mathbb{K}$-vector space
$$ V = \Span_{\mathbb{K}}(\beta_1,\dots,\beta_m).$$
Let $P_1,\dots,P_s$ be the associated primes of $A$; these are naturally $\mathbb{K}$-vector subspaces of $A$. Consider the subspaces of $V$ given by
$$ V_j = V \cap P_j.$$
If $V$ were equal to some $V_j$, then all generators of $J$ would be contained in $P_j$, so that $J$ itself would be contained in $P_j$, and $J$ would therefore consist of zero divisors, contradicting $\grade(J) = n>0$. As the field $\mathbb{K}$ is infinite, 
$$Y = \cup_{j \in [s]} V_j \subsetneq V.$$ In fact, $Y$ is a closed subset of $V$, and so $V \setminus Y$ is a dense open set of $V$. Consider the morphism $\mathbb{K}^{1 \times m} \rightarrow V$ that takes an $m$-tuple $(c_{1j})_{j\in [m]}$ to 
$$ c_{11} \beta_1 + \cdots + c_{1m} \beta_m \in V.$$ Let $U_1$ be the inverse image of $V \setminus Y$; it is a dense open subset of $\bA_1$. By construction, it has the property that for any $y_1=(c_{11},\dots,c_{1m}) \in U_1$, 
$$b_1 = c_{11} \beta_1 + \cdots + c_{1m} \beta_m$$ is a non-zero divisor of $A$.

It follows that $\bar{A} = A/(b_1)$ is a Cohen-Macaulay ring. Set $\bar{J} = J+(b_1) / (b_1)$ and note $$\height(\bar{J})=\grade(\bar{J}) = n-1,$$ where the first equality is because $\bar{A}$ is Cohen-Macaulay and the second is due to \cite[Prp. 1.2.10(d)]{BrunsHerzog}. As $\bar{J}$ is generated by the classes $\bar{\beta}_j$ of the $\beta_j$'s in $\bar{A}$, the induction hypothesis settles the existence of a dense set $U_{y_1} \subseteq \mathbb{K}^{(n-1) \times m}$ such that for every $$y_2=(c_{ij})_{i=2,\dots,n; j=1,\dots,m} \in U_{y_1},$$ setting for $i =2,\dots,n$
\begin{align*}
\bar{b}_{i} : = c_{i1} \bar{\beta}_1 + \cdots + c_{im} \bar{\beta}_{m}, 
\end{align*} the sequence $\bar{b}_2,\dots,\bar{b}_n$ of $\bar{A}$ is regular. Set
\begin{align*}
b_{i} &:= c_{i1} \beta_1 + \cdots + c_{im} \beta_{m}, \, \, \, i \in [n] \\
y_1 &:=(c_{1j})_{j=1,\dots,m} \in U_1, \\
y_2 &:=(c_{ij})_{i=2,\dots,n; j=1,\dots,m} \in U_{y_1}.
\end{align*} It follows from the above that $\underline{b}_n$ is a regular sequence in $A$. This is true for any $n \times m$ matrix $$C = (c_{ij}) = (y_1,y_2)$$ that belongs to the set 
$$ U = \bigcup_{y_1 \in U_1} \{y_1\} \times U_{y_1},$$ 
where we view $y_1$ as the first row of the matrix, and $y_2$ as the bottom $(n-1) \times m$ submatrix. By Lemma \ref{lem:dense}, $U$ is a dense subset of $$\mathbb{K}^{n \times m} \cong \mathbb{K}^{1 \times m} \times \mathbb{K}^{(n-1) \times m}.$$

Finally, we argue for the last claim of the Theorem. So suppose $m=n$. Viewing $\underline{b}_n$ and $\underline{\beta}_n$ as column-vectors, we have  
$ \underline{b}_n = C \underline{\beta}_n.$
To show that the $b_j$'s generate $J$, we need to show that $C$ is invertible. If $C$ were not invertible, any left non-zero null-vector $\xi \in \mathbb{K}^{1 \times n}$ of $C$ would induce the linear dependence relation
$$\xi \underline{b}_n = \xi_1 b_1 + \cdots + \xi_n b_n = 0.$$
This would imply that the ideal generated by the $b_j$'s can be generated by $n-1$ elements. In turn, this would imply
$$ \grade( \underline{b}_n)  = \height(\underline{b}_n) \le n-1,$$
where the equality holds because $A$ is Cohen-Macaulay, and the inequality is from Krull's height theorem. This would contradict the fact that $\underline{b}_n$ is a regular sequence of length $n$. 
\end{proof}

\section{A Theorem in Polynomial Rings}

When $A = \mathbb{K}[x_1,\dots,x_m]$ is a polynomial ring over an infinite field we can say even more:

\begin{theorem} \label{thm:polynomial-ring}
Let $A$ be a polynomial ring over an infinite field $\mathbb{K}$, and $I = (\underline{\alpha}_n)$ an ideal of height $n$. Then for $i, j \in [n]$, there exist $c_{ij} \in \mathbb{K}$, such that, with 
$$a_i := c_{i1} \alpha_1 + \cdots + c_{in} \alpha_n, \, \, \, i \in [n]$$
and $\pi$ any permutation on $[n]$, the sequence
$a_{\pi(1)},\dots,a_{\pi(n)}$ is regular. 
\end{theorem}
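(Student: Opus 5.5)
Apply Theorem \ref{thm:CM} once for each ordering and intersect the resulting dense sets. One point needs care: Theorem \ref{thm:CM} only gives a \emph{dense} set, and two dense sets need not meet. So I need to know that the set of good matrices actually contains a non-empty Zariski open set, or at least that it is constructible. Then density upgrades to ``contains a dense open set'', and finitely many dense open sets always intersect.

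\textbf{Reduction to the identity ordering.} For a permutation $\pi$ on $[n]$, write $C_\pi$ for the matrix $C$ with its rows permuted by $\pi$. Then $a_{\pi(1)},\dots,a_{\pi(n)}$ is exactly the sequence $\underline{b}_n$ that Theorem \ref{thm:CM} associates to $C_\pi$. Let $W \subseteq \mathbb{K}^{n\times n}$ be the set of $C$ for which $\underline{b}_n = C\underline{\alpha}_n$ is a regular sequence in the given order. I want a $C$ lying in $\bigcap_\pi \sigma_\pi^{-1}(W)$, where $\sigma_\pi$ is the row permutation. Each $\sigma_\pi$ is a linear automorphism of $\mathbb{K}^{n\times n}$, so it suffices to show that $W$ contains a non-empty Zariski open set $W_0$. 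Then the finitely many non-empty opens $\sigma_\pi^{-1}(W_0)$ meet, by irreducibility of $\mathbb{K}^{n\times n}$, and any $C$ in the intersection works.

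\textbf{Showing $W$ contains a non-empty open set.} I would redo the induction in the proof of Theorem \ref{thm:CM}, using that $A$ is a polynomial ring. Since $A$ is Cohen--Macaulay, $b_1,\dots,b_n$ is regular in $A$ if and only if $\height(b_1,\dots,b_k)=k$ for every $k\in[n]$. For the forward direction, a regular sequence has height equal to its length, as shown in \S\ref{section:Introduction}. For the converse, argue inductively with unmixedness, as in the $n=2$ case of \S3: if $b_1,\dots,b_{k-1}$ is regular and $b_k$ were a zero-divisor mod $(b_1,\dots,b_{k-1})$, then $b_k$ would lie in an associated prime of height $k-1$, forcing $\height(b_1,\dots,b_k)\le k-1$. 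Note also that no localization issue arises: $(b_1,\dots,b_n)\subseteq I\neq A$.

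So for each $k$ it suffices to show that the set
\[
\{C : \height(b_1,\dots,b_k)=k\}
\]
contains a non-empty open set of $\mathbb{K}^{k\times n}$. I would prove this via fibre dimension. Work over $\bar{\mathbb{K}}$ and consider the incidence variety
\[
Z=\{(C,p)\in \mathbb{K}^{k\times n}\times \mathcal{V}(I_{\bar{\mathbb{K}}}\text{-related}) : (C\underline{\alpha})_i(p)=0,\ i\le k\}.
\]
Cleaner: $\height(b_1,\dots,b_k)=k$ fails exactly when the fibre $\{p : b_1(p)=\cdots=b_k(p)=0\}$ has a component of dimension $>m-k$, where $m$ is the number of variables. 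Height does not change under the faithfully flat extension to $\bar{\mathbb{K}}$. By upper semicontinuity of fibre dimension (Chevalley), applied to the projection $Z\to\mathbb{K}^{k\times n}$, the bad set is closed. Theorem \ref{thm:CM}, applied with $m=n$ and $k$ in place of $n$, gives a dense set of good $C$; this is legitimate because $\height(I)=n\ge k$ and $\grade\ge k$, so a length-$k$ truncation of a regular sequence is still regular. Hence the good set is a non-empty open set. Intersecting over $k\in[n]$ yields the desired $W_0$.

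\textbf{Main obstacle.} The delicate step is the upper semicontinuity argument. Fibre dimension of $Z\to\mathbb{K}^{k\times n}$ is upper semicontinuous on the source; for the bad locus in the base I need properness or a constructibility-plus-specialization argument. My first attempt would be to note that the bad locus is the image of the closed set $\{z\in Z : \dim_z Z_{\pi(z)} > m-k\}$, hence constructible. A constructible set whose complement is dense cannot contain a dense open set. It therefore has closure $\neq \mathbb{K}^{k\times n}$, so its complement contains a non-empty open set, which is exactly what is needed. Finally, since $\mathbb{K}$ is infinite, $\mathbb{K}$-points are dense in that open set, so the chosen $C$ can be taken with entries in $\mathbb{K}$.
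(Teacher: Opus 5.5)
Your argument is correct, and it takes a genuinely different route from the paper's.

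\textbf{The paper's route.} It works with the generic combinations $g_i=\sum_j t_{ij}\alpha_j$ over $\mathbb{K}(t)$. Using the automorphism $t_{ij}\mapsto t_{\pi(i)j}$ and Buchberger's criterion, it shows $\init_\prec(g_1,\dots,g_i)=\init_\prec(g_{\pi(1)},\dots,g_{\pi(i)})$. It then specializes the Gr\"obner bases on a non-empty open set $V$, and uses $\height J=\height \init_\prec(J)$ to transport $\height(a_1,\dots,a_i)=i$, supplied by Theorem~\ref{thm:CM} at a point of $U\cap V$, to every $i$-subset. Regularity of each ordering then comes from depth sensitivity of the Koszul complex.

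\textbf{Your route.} You prove directly that $\{C:\height(b_1,\dots,b_k)=k\}$ contains a non-empty open set. The bad locus is constructible, by Chevalley's semicontinuity of local fibre dimension on the incidence variety together with Chevalley's image theorem, and Theorem~\ref{thm:CM} supplies density of the good locus. The symmetry under permutations then comes for free, since row permutations are automorphisms of $\mathbb{K}^{n\times n}$. You convert heights to regularity by unmixedness rather than Koszul homology.

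\textbf{What each buys.} Your approach avoids Gr\"obner bases and the Step~2 claim that Buchberger's algorithm specializes well on an open set. It also works essentially verbatim for affine Cohen--Macaulay $\mathbb{K}$-algebras in which height is read off from fibre dimension. The cost is heavier machinery, plus a detour through $\bar{\mathbb{K}}$. That detour uses two facts: $\mathbb{K}^N$ is dense in $\bar{\mathbb{K}}^N$ for $\mathbb{K}$ infinite, so the dense set from Theorem~\ref{thm:CM} stays dense and a $\mathbb{K}$-point can be chosen at the end; and heights are unchanged under $\mathbb{K}[x]\to\bar{\mathbb{K}}[x]$. The paper's argument is more algebraic and yields extra structure: for $C\in V$, every $i$-subset of the $a_j$ has the same initial ideal $\init_\prec(g_1,\dots,g_i)$.

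\textbf{Points to tidy.}
\begin{itemize}
\item The incidence variety should be $Z\subseteq \mathbb{A}^{k\times n}\times\mathbb{A}^{m}$, not a subset of $\mathcal{V}(I)$, so that its fibre over $C$ is $\mathcal{V}(b_1,\dots,b_k)$.
\item Your earlier assertion that the bad set is \emph{closed} is unjustified, because $Z\to\mathbb{A}^{k\times n}$ is not proper. Drop it; the constructibility argument you give later is valid and suffices.
\item Theorem~\ref{thm:CM} is stated for height exactly $n$. To get density in $\mathbb{K}^{k\times n}$, apply it as stated and project onto the first $k$ rows, since a continuous surjection maps dense sets to dense sets.
\end{itemize}
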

\begin{proof}
\underline{Step 1.} For $i, j \in [n]$ let $t_{ij}$ be new indeterminates and denote by $\mathbb{K}(t)$ the field of rational functions in the $t_{ij}$'s over $\mathbb{K}$. Set $$A'= \mathbb{K}(t)[x_1,\dots,x_m]$$ and define polynomials in $A'$ as 
$$ g_i = t_{i1} \alpha_1 + \cdots + t_{in} \alpha_n, \, \, \, i \in [n].$$

Fix any term order $\prec$ on $A'$, and with $i$ fixed, apply the Buchberger algorithm \cite[Thm. 2, \S 7, Chp. 2]{CoxLittleOShea} on $g_1,\dots,g_i$ to produce a Gr\"obner basis of the ideal $J = (g_1,\dots,g_i).$ The algorithm terminates with a set of polynomials $G_1,\dots,G_s$ which generate $J$ and whose $S$-polynomials $\Sp(G_\lambda, G_\mu)$ reduce to zero modulo $J$. In particular, there exist polynomials $p_{\mu \nu}$, $q_{u v}$ and $r_{\lambda \mu \nu}$  in $A'$ such that 
\begin{align*}
G_{\mu} &= p_{\mu 1} g_1 + \cdots + p_{\mu i} g_i, \\
g_u & = q_{u1} G_1 + \cdots + q_{us} G_s, \\
\Sp(G_\lambda, G_\mu) & = p_{\lambda \mu  1} g_1 + \cdots + p_{\lambda \mu i} g_i,
\end{align*} for $\mu \in [s]$ and $u \in [i]$. Let $\pi$ be a permutation on $[n]$ and $\varphi$ the ring automorphism of $A'$ that sends $t_{ij}$ to $t_{\pi(i)j}$ and otherwise leaves $\mathbb{K}[x_1,\dots,x_m]$ fixed. Then $\varphi(g_u) = g_{\pi(u)}$ and  
\begin{align*}
\varphi(J) &= \big(g_{\pi(1)},\dots,g_{\pi(i)}\big), \\
\varphi(G_{\mu}) &= \sum\nolimits_{j \in [i]}\varphi(p_{\mu j}) g_{\pi(j)}, \\
g_{\pi(u)} & = \sum\nolimits_{j \in [i]} \varphi(q_{uj}) \varphi(G_j), \\
\varphi\big(\Sp(G_\lambda, G_\mu)\big) & = \sum\nolimits_{j \in [i]} \varphi(p_{\lambda \mu j}) g_{\pi(j)}, \tag{$\#$}
\end{align*} for $\lambda, \mu \in [s]$ and $u \in [i]$. The first three relations express that the $\varphi(G_\mu)$'s generate $\varphi(J)$. 

For a polynomial $F \in A'$, write 
$\LT(F) = c_F \, u_F $ for the leading term of $F$; $u_F$ is the leading monomial of $F$ (it is a monomial in the $x_i$'s) and $c_F \in \mathbb{K}(t)$. As $\varphi$ leaves the $x_i$'s fixed and restricted to $K(t)$ it is an automorphism, 
$$\LT(\varphi(F)) = \varphi(c_F) \, u_F.$$ With $G \in A'$ another polynomial, the pair $F,G$ gives rise to the S-polynomial
$$\Sp(F,G) = \frac{\lcm(u_F,u_G)}{c_F u_F} F - \frac{\lcm(u_F,u_G)}{c_G u_G} G.$$ Immediately one verifies that
$$ \varphi\big(\Sp(F,G)\big) = \Sp\big(\varphi(F), \varphi(G) \big).$$

This, together with relation $(\#)$, implies that the $S$-polynomials of the $\varphi(G_\mu)$'s also reduce to zero modulo $\varphi(J)$, allowing us to conclude via Buchberger's criterion \cite[Thm. 2, \S 7, Chp. 2]{CoxLittleOShea} that the $\varphi(G_\mu)$'s are a Gr\"obner basis for $\varphi(J)$. As the leading terms of a Gr\"obner basis generate the initial ideal, and as 
$u_{G_\mu} = u_{\varphi(G_\mu)},$ we draw the key consequence  
\begin{align}
\init_{\prec}\big(g_1,\dots,g_i\big) = \init_{\prec}\big(g_{\pi(1)},\dots,g_{\pi(i)}\big). \tag{$\ast$} \label{eq:initials-t}
\end{align}

\underline{Step 2.} Consider now the affine space $\bA:= \mathbb{K}^{n \times n}$. For any $n \times n$ matrix $C=(c_{ij}) \in \bA$, and any $i \in [n]$, we obtain polynomial $a_i := g_i|_{C}$ of $A$ by substituting $t_{ij}$ by $c_{ij}$. With $i \in [n]$ fixed and $\pi$ a permutation on $[n]$, we apply the Buchberger algorithm on $a_{\pi(1)},\dots,a_{\pi(i)}$ to obtain a Gr\"obner basis for the ideal $\big(a_{\pi(1)},\dots,a_{\pi(i)}\big)$. Then on a non-empty open subset $V_{\pi,i}$ of $\bA$, the Buchberger algorithm with furnish a Gr\"obner basis for $\big(a_{\pi(1)},\dots,a_{\pi(i)}\big)$ that has the same leading monomials as the Gr\"obner basis for the ideal $\big(g_{\pi(1)},\dots,g_{\pi(i)}\big)$. That is, $\forall C \in V_{\pi,i}$, 
\begin{align*}
&\init_{\prec}\big(a_{\pi(1)},\dots,a_{\pi(i)}\big) \\
&= \init_{\prec}\big(g_{\pi(1)},\dots,g_{\pi(i)}\big) \\
&=\init_{\prec}\big(g_1,\dots,g_i),
\end{align*} the last equality coming from $(\ast)$. As $\bA$ is irreducible, the intersection 
$ V  = \cap_{\pi, i} V_{\pi, i}$ of the finitely many non-empty open sets $V_{\pi, i}$ is also non-empty and open. By construction, for any $C \in V$, the $a_j$'s have the property that the initial ideal of the ideal generated by any $i$ of them, coincides with $\init_{\prec}\big(g_1,\dots,g_i\big)$. 

\underline{Step 3.} By Theorem \ref{thm:CM} there exists a dense subset $U \subseteq \bA$, such that for any $C=(c_{ij}) \in U$ the polynomials $a_1,\dots,a_n$ are a regular sequence of $A$. As $U$ is dense, it intersects the non-empty open set $V$ of Step 2. Taking $C \in U \cap V$, one traces a chain of equalities
\begin{align*}
&\grade \big(a_{\pi(1)},\dots,a_{\pi(i)}\big) \\
&=\height\big(a_{\pi(1)},\dots,a_{\pi(i)}\big) \\
& = \height \Big( \init_{\prec} \big(a_{\pi(1)},\dots,a_{\pi(i)}\big) \Big) \\
& = \height \Big( \init_{\prec} \big(g_{\pi(1)},\dots,g_{\pi(i)}\big) \Big) \\
&= \height \big(\init_{\prec}(g_1,\dots,g_i) \big) \\
& = \height \big( \init_{\prec}(a_1,\dots,a_i)\big)  \\
& =  \height (a_1,\dots,a_i)  \\
& = \grade(a_1,\dots,a_i) = i,
\end{align*} with equalities 2 and 6 because the height of an ideal is always equal to the height of its initial ideal \cite[Proposition 1.6.2(a)]{AldoBook}, equality 3 because $C \in V_{\pi,i}$, equality 4 being \eqref{eq:initials-t}, equality 5 because $C \in V_{\text{id},i}$,  equalities 1 and 7 because $A$ is Cohen-Macaulay, and equality 8 because $C \in U$ whence $\underline{a}_i$ is a regular sequence. 

\underline{Step 4.} We now argue that for $C$ as in Step 3, $a_{\pi(1)},\dots,a_{\pi(n)}$ is a regular sequence in $A$ for any $\pi$. We argue inductively on $i$ that $$a_{\pi(1)},\dots,a_{\pi(i)}$$ is a regular sequence in $A$. This is true for $i=1$, because each $a_j$ is a non-zero element of the integral domain $A$. For $i>1$, the induction hypothesis asserts $a_{\pi(1)},\dots,a_{\pi(i-1)}$ is a regular sequence. It suffices to prove $a_{\pi(i)}$ is not a zero-divisor of $A / \big(a_{\pi(1)},\dots,a_{\pi(i-1)}\big).$ By Step 3
$$\grade \big(a_{\pi(1)},\dots,a_{\pi(i)}\big) = i.$$ By the \emph{depth densitivity} of the Koszul complex \cite[Thm. 16.8]{Matsumura},  
$$H_1\big(a_{\pi(1)},\dots,a_{\pi(i)}\big) = 0,$$ i.e. the homology of the Koszul complex associated to the elements $a_{\pi(1)},\dots,a_{\pi(i)}$ at homological position $1$ vanishes. This is tantamount to saying that every $i$-tuple of polynomials $(p_1,\dots, p_i)$ such that  
\begin{align}
p_1 a_{\pi(1)} + \cdots + p_i a_{\pi(i)} = 0, \tag{$\lozenge$} \label{eq:syzygy}
\end{align}
can be expressed as an $A$-linear combination of \emph{Koszul syzygies}, i.e. of $i$-tuples $\mathfrak{K}_{k,\ell}$, each associated with a pair $k < \ell$ of integers in $[i]$, for which the $k$th entry is $-a_{\pi(\ell)}$, the $\ell$th entry is $a_{\pi(k)}$, and all other entries are zero. Precisely, there are polynomials $f_{k,\ell} \in A$ such that 
$$(p_1,\dots, p_i) = \sum_{1 \le k < \ell \le i} f_{k,\ell} \, \mathfrak{K}_{k,\ell}.$$ From the above relation we read 
\begin{align}
p_i = \sum_{k=1}^{i-1} f_{k,i} \, a_{\pi(k)}. \tag{$\lozenge\lozenge$} \label{eq:regular}
\end{align} Relations \eqref{eq:syzygy} and \eqref{eq:regular} express that $\alpha_{\pi(i)}$ is not a zero-divisor modulo $\big(a_{\pi(1)},\dots,a_{\pi(i-1)}\big)$. 
\end{proof}

\section{Quasi-Regular Sequences}

Another classical notion, weaker than the notion of a regular sequence, is that of a \emph{quasi-regular} sequence \cite[Dfn. 15.1.7, Ch. 0]{Grothendieck}.

\begin{definition}
We say $\underline{\alpha}_n$ is a quasi-regular sequence, if $I=(\underline{\alpha}_n) \neq A$ and the canonical $A/I$-algebra map $$A/I[x_1,\dots,x_n] \rightarrow \oplus_{d \ge 0} I^d/I^{d+1}$$ that takes $x_i$ to the class of $\alpha_i$, is bijective. 
\end{definition}

In a local Noetherian ring the two notions coincide, while in general one only has that a regular sequence is quasi-regular. Note that by definition, $\underline{\alpha}_n$ is quasi-regular if and only if any permutation of $\underline{\alpha}_n$ is. We have:

\begin{proposition}
If $A$ is Cohen-Macaulay and $\height(I)=n$, then $\underline{\alpha}_n$ is quasi-regular. 
\end{proposition}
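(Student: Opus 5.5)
Since quasi-regularity is a statement about the graded map $\phi: (A/I)[x_1,\dots,x_n] \to \mathrm{gr}_I(A)=\oplus_{d\ge 0} I^d/I^{d+1}$, which is always surjective, the plan is to show injectivity by localizing. Injectivity of a map of $A$-modules can be checked locally at maximal ideals $\mathfrak{m}$ of $A$. Both sides commute with localization: $\big((A/I)[x]\big)_{\mathfrak m} = (A_{\mathfrak m}/I_{\mathfrak m})[x]$ and $(I^d/I^{d+1})_{\mathfrak m} = I_{\mathfrak m}^d/I_{\mathfrak m}^{d+1}$. The map $\phi_{\mathfrak m}$ is exactly the canonical map for the sequence $\underline{\alpha}_n$ in $A_{\mathfrak m}$. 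If $I \not\subseteq \mathfrak m$, both sides vanish after localizing, since $I_{\mathfrak m}=A_{\mathfrak m}$ kills $A/I$ and all $I^d/I^{d+1}$. So it suffices to treat $\mathfrak m \supseteq I$.

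For such $\mathfrak m$, the ring $A_{\mathfrak m}$ is local Cohen-Macaulay, and I would argue that $\height(I A_{\mathfrak m}) = n$. Every minimal prime of $IA_{\mathfrak m}$ is $PA_{\mathfrak m}$ for a prime $P \subseteq \mathfrak m$ minimal over $I$. Its height is $\height(P)$, which is at least $\height(I)=n$, and at most $n$ by Krull's height theorem. Then \cite[Thm. 2.1.2(d)]{BrunsHerzog} gives that $\underline{\alpha}_n$ is a regular sequence in $A_{\mathfrak m}$. Since a regular sequence is quasi-regular \cite[Thm. 1.1.8]{BrunsHerzog} (or the standard result cited with the definition), $\phi_{\mathfrak m}$ is bijective. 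Hence $\phi$ is bijective. Also $I\neq A$, since $\height(I)=n$ is finite, so $I$ is a proper ideal. This completes the argument.

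The main point to check carefully is that the construction of $\phi$ commutes with localization, i.e.\ that $\phi\otimes_A A_{\mathfrak m}$ equals the canonical map for $IA_{\mathfrak m}$. This is routine because localization is exact and $(I^d)_{\mathfrak m} = (IA_{\mathfrak m})^d$. The height computation in the localization is the only other step needing care, and it is exactly the argument of \S\ref{section:Introduction}.
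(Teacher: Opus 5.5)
Your argument is correct, but it follows a different route from the paper's.

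The paper argues globally with Koszul homology. Cohen--Macaulayness gives $\grade(I)=\height(I)=n$. Depth sensitivity \cite[Thm. 16.8]{Matsumura} then gives $H_1(\underline{\alpha}_n)=0$, and Bourbaki's criterion (vanishing of $H_1$ forces quasi-regularity) finishes the proof.

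You instead use that the canonical map $(A/I)[x_1,\dots,x_n]\to\oplus_{d\ge 0}I^d/I^{d+1}$ is surjective and commutes with localization. So its bijectivity can be tested at maximal ideals $\mathfrak m\supseteq I$. There the Cohen--Macaulay local theory turns $\height(IA_{\mathfrak m})=n$ into regularity of $\underline{\alpha}_n$, and regular implies quasi-regular.

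What your route buys:
\begin{itemize}
\item It needs only the local characterization of regular sequences and the elementary implication regular $\Rightarrow$ quasi-regular. It avoids the Koszul complex and Bourbaki's theorem altogether.
\item It explains why the sequence of \S\ref{section:Counterexample} is quasi-regular even though no permutation of it is regular. Quasi-regularity is a local property, and locally regular sequences can be permuted freely. The obstruction to regularity is therefore purely global.
\end{itemize}

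What the paper's route buys:
\begin{itemize}
\item It is shorter given its citations.
\item It fits the Koszul-homology technique already used in Step 4 of Theorem~\ref{thm:polynomial-ring}.
\item It works verbatim whenever $\grade(I)=n$ in a Noetherian ring. The Cohen--Macaulay hypothesis enters only to convert height into grade.
\end{itemize}
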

\begin{proof}
As $\grade(I) = n$, \cite[Thm. 16.8]{Matsumura} asserts that the first Koszul homology associated to $\underline{\alpha}_n$ vanishes, i.e. $H_1(\underline{\alpha}_n)=0$. Then assertion (iii)$\Rightarrow$(v) of 
\cite[Th\'eor\`em 1, \S 9, Ch. X]{Bourbaki} implies that 
$\underline{\alpha}_n$ is quasi-regular (see also \cite[Rem. 1.6.20]{BrunsHerzog}).
\end{proof}

\noindent In particular, sequence $\underline{\alpha}_3$ in \S \ref{section:Counterexample} is quasi-regular.

\bibliographystyle{alpha}

\begin{thebibliography}{{{Sta}}26}

\bibitem[BCRV22]{AldoBook}
Winfried Bruns, Aldo Conca, Claudiu Raicu, and Matteo Varbaro.
\newblock {\em {Determinants, Gr{\"o}bner Bases and Cohomology}}.
\newblock Springer, 2022.

\bibitem[BH98]{BrunsHerzog}
Winfried Bruns and Jurgen Herzog.
\newblock {\em {Cohen-Macaulay Rings}}.
\newblock Cambridge University Press, 1998.

\bibitem[Bou80]{Bourbaki}
Bourbaki.
\newblock {\em {Alg\'ebre, Chapitre 10}}.
\newblock Springer, 1980.

\bibitem[CLO15]{CoxLittleOShea}
David~A. Cox, John Little, and Donal O'Shea.
\newblock {\em {Ideals, Varieties,and Algorithms: An Introduction to
  Computational Algebraic Geometry and Commutative Algebra}}.
\newblock Springer, 2015.

\bibitem[Dav67]{Davis}
Edward~D. Davis.
\newblock {{Ideals of the Principal Class, R-Sequences and a Certain Monoidal
  Transformation}}.
\newblock {\em Pacific Journal of Mathematics}, 20(2), 1967.

\bibitem[Eis94]{Eisenbud}
David Eisenbud.
\newblock {\em {Commutative Algebra: with a View Toward Algebraic Geometry}}.
\newblock Springer, 1994.

\bibitem[GD64]{Grothendieck}
A.~Grothendieck and J.~Dieudonn{\'e}.
\newblock {\em {\'E}l{\'e}ments de g{\'e}om{\'e}trie alg{\'e}brique: {IV}.
  {{\'E}}tude locale des sch{\'e}mas et des morphismes de sch{\'e}mas,
  {Premi{\`e}re} partie}.
\newblock Institut des Hautes {\'E}tudes Scientifiques, 1964.

\bibitem[Har77]{Hartshorne}
Robin Hartshorne.
\newblock {\em {Algebraic Geometry}}.
\newblock Springer, 1977.

\bibitem[Har92]{Harris}
Joe Harris.
\newblock {\em {Algebraic Geometry: A First Course}}.
\newblock Springer, 1992.

\bibitem[Mac16]{Macaulay}
Francis~S. Macaulay.
\newblock {\em {The Algebraic Theory of Modular Systems}}.
\newblock Cambridge University Press, 1916.

\bibitem[Mat89]{Matsumura}
Hideyuki Matsumura.
\newblock {\em {Commutative Ring Theory}}.
\newblock Cambridge University Press, 1989.

\bibitem[{{Sta}}26]{stacks-project}
The {{Stacks Project Authors}}.
\newblock {The Stacks Project}.
\newblock https://stacks.math.columbia.edu, 2026.

\bibitem[Vak24]{Vakil}
Ravi Vakil.
\newblock {\em {Foundations of Algebraic Geometry}}.
\newblock 2024.

\end{thebibliography}

\end{document}